\newcommand{\Div}{\mbox{\rm div}}
\newcommand{\Lim}[1]{{\displaystyle \lim_{\footnotesize #1}}}
\newcommand{\ms}{\medskip\smallskip}
\newcommand{\bfe}{{\mbox{\boldmath $e$}} }
\newcommand{\bfz}{{\mbox{\boldmath $z$}} }
\newcommand{\ba}{\begin{array}}
\newcommand{\ea}{\end{array}}
\newcommand{\ee}{\end{equation}}
\newcommand{\eeq}[1]{\label{eq:#1}\end{equation}}
\newcommand{\real}{{\rm I\!\!R}}
\newcommand{\bfx}{\mbox{\boldmath $x$}}
\newcommand{\bfphi}{\mbox{\boldmath $\varphi$}}
\newcommand{\bfPhi}{\mbox{\boldmath $\Phi$}}
\newcommand{\bfv}{{\mbox{\boldmath $v$}} }
\newcommand{\bfu}{{\mbox{\boldmath $u$}} }
\newcommand{\bfw}{{\mbox{\boldmath $w$}} }
\newcommand{\bfa}{{\mbox{\boldmath $a$}} }
\newcommand{\calr}{{\mathcal R}}
\newcommand{\bfT}{{\mbox{\boldmath $T$}} }
\newcommand{\bfb}{{\mbox{\boldmath $b$}} }
\newcommand{\bfn}{{\mbox{\boldmath $n$}} }
\newcommand{\ioh}{\int_{\Omega_h}}
\newcommand{\iot}{\int_{\Omega_\theta}}
\newcommand{\ibh}{\int_{\partial B_h}}
\newcommand{\ibth}{\int_{\partial B_\theta}}
\newcommand{\ido}{\int_{\partial\Omega_h}}
\def\Bbb R{\real}
\newcommand{\ED}{\end{description}}
\renewcommand{\&}{and}
\newcommand{\Br}{\begin{remark}\begin{rm}}
\newcommand{\Er}{\end{rm}\end{remark}}
\newtheorem{remark}{Remark}[section]
\newcommand{\Bt}{\begin{theorem}\begin{sl}}
\newcommand{\Et}{\end{sl}\end{theorem}}
\newcommand{\Bl}{\begin{lemma}\begin{sl}}
\newcommand{\El}{\end{sl}\end{lemma}}
\newtheorem{theorem}{Theorem}
\newtheorem{lemma}{Lemma}
\newcommand{\ET}[1]{\end{sl}\label{theorem:#1}\end{theorem}}
\newcommand{\EL}[1]{\end{sl}\label{lemma:#1}\end{lemma}}
\newcommand{\ER}[1]{\end{rm}\label{remark:#1}\end{remark}}
\newcommand{\EC}[1]{\end{sl}\label{corollary:#1}\end{corollary}}
\renewcommand{\em}{\it}
\title[Equilibrium Configuration of  an obstacle immersed in a channel]{Equilibrium configuration of a rectangular obstacle immersed in a channel flow}
\author[D. Bonheure]{Denis Bonheure}
\address{D\'epartement de Math\'ematique - Universit\'e Libre de Bruxelles, Belgium}
\email{denis.bonheure@ulb.ac.be}
\urladdr{homepages.ulb.ac.be/~dbonheur/}
\author[G.P. Galdi]{Giovanni P. Galdi}
\address{Department of Mechanical Engineering - University of Pittsburgh, USA}
\email{galdi@pitt.edu}
\urladdr{https://www.engineering.pitt.edu/PaoloGaldi/}
\author[F. Gazzola]{Filippo Gazzola}
\address{Dipartimento di Matematica - Politecnico di Milano, Italy}
\email{filippo.gazzola@polimi.it}
\urladdr{http://www1.mate.polimi.it/~gazzola/}
\begin{document}

\begin{abstract}
\noindent
Fluid flows around an obstacle generate vortices which, in turn, generate lift forces on the obstacle. Therefore, even in a perfectly symmetric
framework equilibrium positions may be asymmetric. We show that this is not the case for a Poiseuille flow in an unbounded 2D
channel, at least for small Reynolds number and flow rate. We consider both the cases of vertically moving obstacles and obstacles rotating
around a fixed pin.\par\noindent
{\bf AMS Subject Classification:} 35Q30, 35A02, 46E35, 31A15.\par\noindent
{\bf Keywords:} viscous fluids, lift on an obstacle, stability.
\end{abstract}

\maketitle
\vspace{-5mm}
\section{Introduction and main result}

We consider two different fluid-structure problems for a Poiseuille flow through an unbounded 2D channel containing an obstacle.
In the first problem, a rigid rectangular body $B$ is immersed in an unbounded channel $\mathbb R\times(-L,L)$ and is free to move vertically
under the action of both a fluid flow and of transverse restoring forces, as in Figure \ref{channel}.
\begin{figure}[!h]
\begin{center}
\includegraphics[width=14cm]{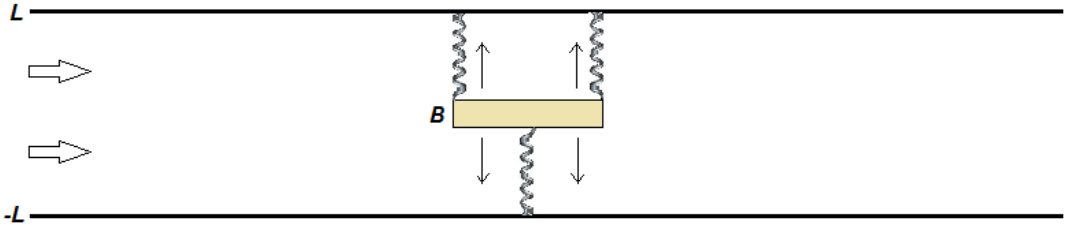}
\caption{The channel with the vertically moving obstacle $B$.}\label{channel}
\end{center}
\end{figure}

In the second problem, the body $B$ is immersed in the same channel $\mathbb R\times(-L,L)$ but is only free to rotate around a pin
located at its center of mass, see Figure \ref{channel2}.
\begin{figure}[!h]
\begin{center}
\includegraphics[width=14cm]{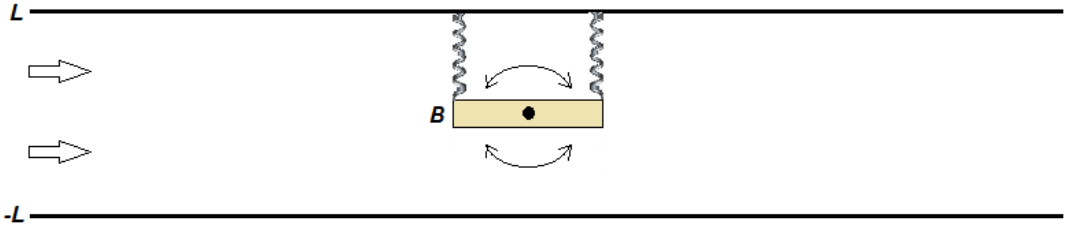}
\caption{The channel with the rotating obstacle $B$.}\label{channel2}
\end{center}
\end{figure}

These two problems are inspired to some bridge models considered in \cite{eighty,gazspe}. The obstacle $B$ represents the cross-section of the
deck of a suspension bridge, that may display both vertical and torsional oscillations, see \cite{gazbook}.
Here we have decoupled these two motions and the action of the restoring forces that generate them.
\begin{figure}[!h]
\begin{center}
\includegraphics[width=14cm]{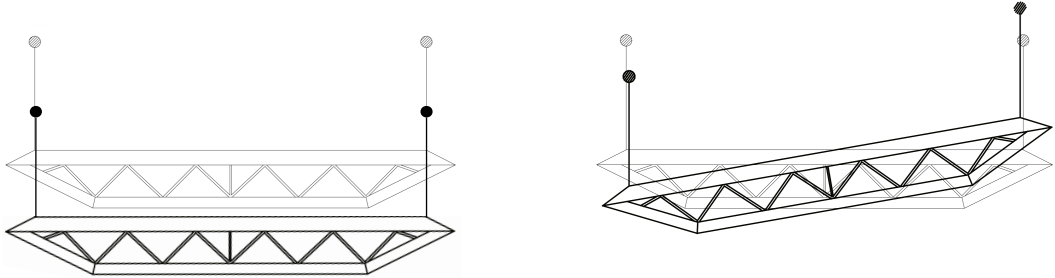}
\caption{Vertical (left) and torsional (right) displacements of a deck.}\label{moving-bridge}
\end{center}
\end{figure}
\vspace{-3mm}

The vertical oscillations in Figure \ref{channel} (and on the left of Figure \ref{moving-bridge}) are created by
three kinds of forces. There is an upwards restoring force due to the elastic action of both the hangers
and the sustaining cables which, somehow, behave as linear springs which may slacken so that they have no downwards action. There is the weight
of the deck which acts constantly downwards: this explains why there is no odd requirement on $f$ in \eqref{ff}. Finally, there is a resistance
to both bending and stretching of the whole deck for which $B$ merely represents a cross-section: this force is superlinear and explains the
infinite limit in \eqref{ff}, the deck is not allowed to go too far away from its equilibrium (horizontal) position due to the elastic resistance to deformations of the whole deck. The torsional oscillations are symmetric, they are due to the possible different behaviors of the hangers and
cables at the two endpoints of the cross-section, see Figure \ref{channel2} and the right picture in Figure \ref{moving-bridge}. Their
symmetric action translates into the odd assumption on $g$ in
\eqref{gg}. Moreover, the restoring force of the hangers+cables system is not as violent and strong as the action of the whole deck, resisting
to bending and stretching: this is why at the endpoints $g$ has a weaker behavior than $f$. The decoupling of vertical and torsional displacement,
as well as the causes generating them, is a first step to understand the behavior of the deck under the action of the wind (assumed here to
be governed by a Poiseuille flow). The full coupled vertical-torsional motion will be studied in a forthcoming paper.\par
For the first problem, a rigid rectangular body $B=[-d,d]\times[-\delta,\delta]$ is immersed in an unbounded channel $\mathbb R\times(-L,L)$ and
is free to move vertically under the action of both a fluid flow and of transverse restoring forces. The union
of the upper and lower boundaries of the channel is denoted by $\Gamma=\mathbb R \times\{-L,L\}$. The position of the center of mass of the
body $B$ is indicated by $h$ and is counted from the middle line $x_2=0$ of the strip.  The body $B$ may take different
positions after translations in the vertical  direction $\bfe_2$, namely,
$$
B_h=B+h\bfe_2\qquad\forall|h|<L-\delta \,.
$$
The cases $|h|=L-\delta$ correspond to a collision of the body $B$ with $\Gamma$.
The domain occupied by the fluid then depends on $h$ and is denoted by
$$\Omega_h=\mathbb R\times (-L,L) \setminus B_h,$$
see again Figure \ref{channel}.
The motion of the fluid is governed by the Navier-Stokes equations driven by a Poiseuille flow of prescribed flow rate.

We are interested in determining the  equilibrium position of the body, for a given flow regime of the fluid. This leads us to determine the  time-independent solutions to the following fluid-structure-interaction evolution problem (in dimensionless form)
\begin{equation}\label{NSevol}
\ba{cc}
\bfu_t-\Div\bfT(\bfu,p)+\calr \bfu\cdot\nabla\bfu=0 ,\quad \Div\bfu=0\quad\mbox{in $\Omega_h\times (0,T)$}\\ \ms
\bfu\left|\right._{\partial B_h}=\dot h\bfe_2, \quad  \bfu\left|\right._\Gamma=0,\quad \Lim{|x_1|\to\infty}\bfu (x_1,x_2)=\lambda(L^2-x_2^2)\bfe_1,\\
\ddot h+f(h)=-\bfe_2\cdot\ibh \bfT(\bfu,p)\cdot\bfn \quad\mbox{in $ (0,T)$\,.}
\ea
\end{equation}
Here $\bfu$ and $p$ denote (non-dimensional) velocity and pressure fields of the fluid, whereas $\bfn$ is the outward normal to $\partial\Omega_h$ so that, on $\partial B_h$, it is directed in the interior of $B_h$. Moreover, we use $\delta$ (the ``thickness" of the body) as length scale, i.e. $\delta=1$ and set
$\calr=V/\nu$,   $\lambda=|\Phi|/\nu$, where $V$ is a reference speed and $|\Phi|$ denotes the magnitude of the flow rate associated to the Poiseuille motion. For simplicity, for the rescaled $L$ and $d$ we maintain the same notation.
We emphasize that $\Omega_h$ and $\partial B_h$ depend on $h$ through the position of $B_h$ so that the solution $\bfu$ of \eqref{NSevol} depends on $h$ as well;
clearly, $\bfu$ also depends on $\calr$. The ODE \eqref{NSevol}$_3$ states that the motion of the obstacle $B$ is driven by a nonlinear
oscillator equation with elastic restoring force $f=f(h)$ (having the same sign as $h$), and forced by the fluid lift exerted on $B$.
We assume that $f\in C^1(-L+1,L-1)$ satisfies
\begin{equation}\label{ff}
f'(h)>0\ \forall h\in(-L+1,L-1),\quad\lim_{|h|\to L-1}\ |f(h)|(|L-1|-|h|)^\frac32=+\infty\, .
\end{equation}
The last condition in \eqref{ff} has the meaning of a {\em strong force} aiming to prevent collisions of $B$ with $\Gamma$: this means that
the elastic spring is superlinear and has a limit extension before becoming plastic. This condition is necessary due to the boundary
layer that forms when $B$ is close to $\Gamma$, with related appearance of large pressures.\par
Thus, by  eliminating all time derivatives in \eqref{NSevol},  our objective reduces to find a  solution $(\bfu,p,h)$ to
the following boundary-value problem
\begin{equation}\label{NS}
\ba{cc}
\Div\bfT(\bfu,p)=\calr \bfu\cdot\nabla\bfu ,\quad \Div\bfu=0\quad\mbox{in $\Omega_h$}\\ \ms
\bfu\left|\right._{\partial B_h}=\bfu\left|\right._\Gamma=0,\quad\Lim{|x_1|\to\infty}\bfu (x_1,x_2)=\lambda(L^2-x_2^2)\bfe_1,
\ea
\end{equation}
subject to the  compatibility condition
\begin{equation}\label{compatibility}
f(h)=-\bfe_2\cdot\ibh \bfT(\bfu,p)\cdot\bfn.
\end{equation}

We emphasize that the lift is well defined in a generalized sense for weak solutions, see \cite[Section 3.3]{gazspe}. 

In  the second problem, we assume that the body $B$ is free to rotate around a pin located at its center of mass: this means that there
is no obstruction for $B$ to reach a vertical position, which translates into the constraint that $L^2>1+d^2$ (the half diagonal of $B$ is
less than the distance from the pin to $\Gamma$); see again Figure \ref{channel2}.
The different positions of $B$ are now indexed with a parameter $\theta$ representing the angle of rotation with respect to the horizontal
$$
B_\theta=\left(\begin{array}{lr} \cos\theta & -\sin\theta\\
\sin\theta & \cos\theta
\end{array}
\right)B\qquad\forall|\theta|<\frac\pi2.
$$
The domain occupied by the fluid then depends on $\theta$ and is denoted by
$$\Omega_\theta=\mathbb R\times (-L,L) \setminus B_\theta.$$
We suppose that the body is subject to an angular restoring force $g=g(\theta)$ (a torque) and we are again interested in equilibrium positions  which, in this case, are obtained by finding time-independent solutions to the following
fluid-structure-interaction evolution problem
\begin{equation}\label{NSevol-torque}
\ba{cc}\ms
\bfu_t-\Div\bfT(\bfu,p)+\calr \bfu\cdot\nabla\bfu=0 ,\quad \Div\bfu=0\quad\mbox{in $\Omega_\theta\times (0,T)$} \\ \medskip
\bfu\left|\right._{\partial B_\theta}=\dot\theta\bfe_3\times \bfx, \quad  \bfu\left|\right._\Gamma=0,\quad\Lim{|x_1|\to\infty}\bfu (x_1,x_2)=\lambda(L^2-x_2^2)\bfe_1,\\
\ddot \theta+g(\theta)=\bfe_3\cdot\ibth \bfx\times \bfT(\bfu,p)\cdot\bfn \quad\mbox{in $ (0,T)$}\,.
\ea
\end{equation}
Besides the dissimilar geometry of the spatial domains, the other (formal) difference between (\ref{NSevol-torque}) and \eqref{NSevol} relies in the boundary condition over $\partial B$. We shall assume that
$g\in C^1(-\frac\pi2 ,\frac\pi2)$ satisfies
\begin{equation}\label{gg}
g\mbox{ odd},\quad g'(\theta)>0\ \forall\theta\in\big(-\tfrac\pi2,\tfrac\pi2\big),\quad\lim_{\theta\to\pi/2}\ g(\theta)=+\infty\, .
\end{equation}
Compared to \eqref{ff}, we notice in \eqref{gg} the additional oddness assumption and the weaker requirement at the extremal positions.
We emphasize that the restriction to the interval $(-\frac\pi2 ,\frac\pi2)$ is due to physical reasons, since we have in mind the cross-section
of the deck of a bridge which cannot reach a vertical position. From a purely mathematical point of view, the interval could be extended
to $(-\pi,\pi)$ (allowing an upside down rotation) and even larger intervals giving the freedom of multiple rotations.\par
Also in this case, we look for time-independent (weak) solutions to \eqref{NSevol-torque}, that is, solutions
$(\bfu(\theta,\calr),\theta)\in H^1(\Omega_\theta)\times(-\frac\pi2,\frac\pi2)$
satisfying the steady-state problem \eqref{NS} (with $\Omega_h$ replaced by $\Omega_\theta$ and boundary values given in (\ref{NSevol-torque})) along with the compatibility condition
\begin{equation}\label{compatibility-torque}
g(\theta)=\bfe_3\cdot\ibth \bfx\times \bfT(\bfu,p)\cdot\bfn
\end{equation}
Again, we emphasize that we can give a meaning to the torque for weak solutions, arguing as in \cite[Section 3.3]{gazspe} for the lift. 

Our main result, for both problems, states the uniqueness of the equilibrium position for small Reynolds numbers.

\begin{theorem}\label{unique}
Assume that $f\in C^1(-L+1,L-1)$ and $g\in C^1(-\frac\pi2,\frac\pi2)$ satisfy \eqref{ff} and \eqref{gg}.
There exists $\calr_0>0$ and $\lambda_0>0$ such that if $\calr<\calr_0$ and $\lambda<\lambda_0$ then:\par
$\bullet$ the problem \eqref{NS}-\eqref{compatibility} admits a unique solution
$(\bfu(h,\calr),h)\in H^1(\Omega_h)\times(-L+d,L-d)$ given by $(\bfu(0,\calr),0)$;\par
$\bullet$ the problem \eqref{NS}-\eqref{compatibility-torque} admits a unique solution
$(\bfu(\theta,\calr),\theta)\in H^1(\Omega_\theta)\times(-\frac\pi2,\frac\pi2)$ given by $(\bfu(0,\calr),0)$.\par
For both problems, the solutions are smooth ($C^\infty(\Omega_h)$ or $C^\infty(\Omega_\theta)$) in the interior.
\end{theorem}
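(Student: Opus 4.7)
My plan is to freeze the obstacle position, solve the stationary system \eqref{NS} on the fixed domain $\Omega_h$ (resp.\ $\Omega_\theta$) for $\bfu(h,\calr)$, and then read the compatibility condition \eqref{compatibility} (resp.\ \eqref{compatibility-torque}) as a scalar equation in the single unknown $h$ (resp.\ $\theta$), whose only zero must be the symmetric configuration. For the fluid sub-problem I would invoke the classical small-data theory in a channel with Poiseuille outflow: writing $\bfu=\lambda\bfa_h+\bfv$ for a divergence-free extension $\bfa_h$ of the boundary data, a Banach contraction in $H^1(\Omega_h)$ yields a unique $\bfv$ as soon as $\calr$ and $\lambda$ are below explicit thresholds, while interior $C^\infty$-regularity follows from standard bootstrap for the steady Stokes/Navier--Stokes system. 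Pulling $\Omega_h$ back onto the reference domain $\Omega_0$ by a family of diffeomorphisms smoothly parametrized by $h$, the map $h\mapsto\bfu(h,\calr)$ turns out to be $C^1$ into $H^1(\Omega_0)$ on compact sub-intervals, with derivative controlled by $O(\calr+\lambda)$.

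Existence of the symmetric equilibrium I would obtain from a reflection argument: at $h=0$, the domain, the obstacle and the Poiseuille limit $\lambda(L^2-x_2^2)\bfe_1$ are all invariant under $\sigma:(x_1,x_2)\mapsto(x_1,-x_2)$, so uniqueness forces $u_1$ even and $u_2$ odd in $x_2$; this makes $\bfe_2\cdot\int_{\partial B}\bfT(\bfu,p)\cdot\bfn$ vanish, and together with $f(0)=0$ (as $f$ has the sign of $h$) gives \eqref{compatibility}. The same symmetry at $\theta=0$, combined with $g(0)=0$ from oddness, settles \eqref{compatibility-torque}. For uniqueness I would introduce
\[
F(h,\calr,\lambda):=f(h)+\bfe_2\cdot\int_{\partial B_h}\bfT(\bfu(h,\calr),p(h,\calr))\cdot\bfn
\]
and its torsional analogue $G$. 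At $\calr=\lambda=0$ the fluid solution vanishes, so $F(\cdot,0,0)=f$ is strictly increasing by \eqref{ff}; combining this with the $C^1$ dependence of $\bfu$ on $h$ gives, on any compact $K\subset(-L+1,L-1)$,
\[
\partial_h F(h,\calr,\lambda)=f'(h)+O(\calr+\lambda)\quad\text{uniformly on }K,
\]
so $F(\cdot,\calr,\lambda)$ is strictly monotone on $K$ for $\calr,\lambda$ small (depending on $K$), hence vanishes only at $h=0$. An identical argument, using the strict monotonicity and oddness of $g$, handles $G$.

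The main obstacle is promoting this monotonicity on compact sets to global uniqueness, since a priori $h$ may approach $\pm(L-1)$. Here the strong-force condition in \eqref{ff} is decisive: a lubrication boundary layer between $B_h$ and $\Gamma$ allows the lift to blow up at most like $(L-1-|h|)^{-3/2}$, with a constant that vanishes as $(\calr,\lambda)\to 0$, while \eqref{ff} forces $|f(h)|$ to dominate exactly this singular rate. Consequently, any zero of $F(\cdot,\calr,\lambda)$ must lie in a fixed compact sub-interval on which the previous monotonicity argument applies. The torsional case is simpler on this point, because the pin prevents collisions and the hydrodynamic torque stays bounded on all of $(-\pi/2,\pi/2)$, so the divergence $g(\theta)\to+\infty$ as $\theta\to\pi/2$ is automatically dominant. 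The technically hardest ingredient is therefore the quantitative lubrication estimate for the lift, and for its derivative in $h$, uniformly up to the lateral boundary of the admissible range; every other piece slots into the soft small-data/implicit-function framework.
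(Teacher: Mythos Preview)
Your overall architecture matches the paper's: reduce the compatibility condition to a scalar equation in $h$ (resp.\ $\theta$), obtain the symmetric solution by reflection, invoke the strong-force hypothesis \eqref{ff} near $|h|=L-1$, and close by a small-data/compactness argument on the remaining compact interval. The one substantive difference is how the near-collision estimate is obtained. The paper does not estimate the lift through a direct lubrication analysis of the stress on $\partial B_h$; instead it introduces an auxiliary Stokes field $(\bfw,P)$ solving \eqref{stokes} with $\bfw|_{\partial B_h}=\bfe_2$, proves by an elementary cutoff construction that $\|\nabla\bfw\|_{2,\Omega_h}\le c\,\varepsilon^{-3/2}$ (Lemma~\ref{boundw}), and then shows via two integrations by parts that the lift equals the volume integral $\calr\int_{\Omega_h}\bfu\cdot\nabla\bfu\cdot\bfw$ (Lemma~\ref{lift-poi}). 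The $\varepsilon^{-3/2}$ blow-up for the lift then follows from H\"older and the uniform energy bound on $\bfu$, with no boundary-layer pressure estimate required; in the torsional case the same device yields a \emph{uniform} bound on the torque since $\bfw(\theta)$ never degenerates (Lemma~\ref{torque}). A second, minor simplification: on the intermediate range of $h$ the paper does not establish $\partial_h\psi>0$ but only the sign of $\psi$, using $\psi(h,0)=f(h)$ together with continuity in $\calr$ on compacts; this spares you from controlling the $h$-derivative of the lift anywhere except at the single point $(0,0)$ where the Implicit Function Theorem is invoked. So your plan is sound, but the auxiliary $\bfw$ replaces what you flagged as ``the technically hardest ingredient'' by two short lemmas.
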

\smallskip\par
The proofs for the two problems \eqref{NS}-\eqref{compatibility} and \eqref{NS}-\eqref{compatibility-torque} follow the same strategy, with
some slight modifications. We give a sketch of the two proofs in Section \ref{proofunique}.

\section{Sketch of the proof of Theorem \ref{unique}}\label{proofunique}

We begin by showing  well-posedness for \eqref{NS} {\em without} imposing any fluid-structure constraint, neither \eqref{compatibility},
nor \eqref{compatibility-torque}. As the condition at infinity is not homogeneous, we look for a solution written as
$$\bfu = \bfv + \lambda \bfa,$$
where $\bfv\in H^1_0(\Omega_h)$ and $\bfa$ is a solenoidal vector field which is equal to $(L^2-x_2^2)\bfe_1$ outside a compact set and vanishes on $\partial B$. We refer to \cite[VI.1 \& XIII]{galdibook} for more details on the functional setting. Since we seek an energy bound
independent of the position of $B$, we introduce two specific extensions $\bfa$ and $\bfb$ of the Poiseuille flow which vanish on either $B_h$ or $B_\theta$. By the symmetry of the problem  \eqref{NS}-\eqref{compatibility}, one can assume that $B_h$ lies entirely above the horizontal line $x_2=-L+1+\tau$ where $\tau>0$ and $-L+1+\tau<0$. We then define $\bfa$ as follows. Consider the domain
$$\Sigma = (-4d,-2d)\times (-L,L)\cup [-2d,2d]\times (-L,-L+1+\tau) \cup (2d,4d)\times (-L,L).$$
We also introduce
$$\Omega_\infty = \{(x_1,x_2);\, |x_1|\ge 4d,\ |x_2|\le L\},\qquad\Omega_d = \{(x_1,x_2);\, |x_1|\le 4d,\ (x_1,x_2)\in\Omega_h\}.$$
Let $\zeta$ be a cutoff function separating the obstacle and the Poiseuille flow at infinity, e.g.\
$$\zeta(x_1,x_2)=\zeta(x_1)=\left\{\begin{array}{ll}
0 & \mbox{ if }|x_1|<3d\\
1 & \mbox{ if }|x_1|>4d
\end{array}\right.\qquad\zeta\in C^\infty\big(\mathbb{R}\times[-L,L]\big).
$$
Consider the problem
$$
\Div\bfz=\zeta'(x_1)(L^2-x_2^2)\mbox{ in }\Sigma,\quad\bfz=0\mbox{ on }\partial\Sigma\, ;
$$
by \cite[Theorem III.3.3]{galdibook} this problem admits a solution $\bfz\in H^2_0(\Sigma)$ because $\zeta'(x_1)(L^2-x_2^2)\in H^{1}_0(\Sigma)$. Moreover, we have the estimate 
$$ \|\nabla\bfz\|_{H^1(\Sigma)}\le c\|\zeta'(x_1)(L^2-x_2^2)\|_{H^1(\Sigma)},$$
where $c>0$ depends only on $\Sigma$. 
Hence, if we extend $\bfz$ by zero outside
$\Sigma$ we obtain that $\bfz\in H^1_{0}(\mathbb{R}\times(-L,L))$. Then we define
$$
\bfa(\bfx):=\zeta(x_1)(L^2-x_2^2)\bfe_1-\bfz(\bfx)
$$
in such a way that $\Div\bfa=0$. 
It is clear that $\bfa\in H^2_{loc}(\Omega_h)$ and that $\bfa=(L^2-x_2^2)\bfe_1$ for $|x_1|\ge 4d$. It follows that $\bfa\cdot\nabla\bfa=0$ for $|x_1|\ge 4d$ and $-\Delta \bfa =\nabla\Pi$ for $|x_1|\ge 4d$, where $\Pi(x_1,x_2)=2x_1$. 
We take as weak formulation of \eqref{NS}
$$\ioh\nabla\bfv:\nabla\bfphi = \calr\ioh\Big[\bfv\cdot\nabla\bfphi +\lambda\bfa\cdot\nabla\bfphi \Big]\bfv-\lambda\ioh\Big[\bfv\cdot\nabla\bfa+\bfa\cdot\nabla\bfa\big]\bfphi
+\lambda\ioh\Delta\bfa\cdot \bfphi,$$
for any solenoidal test function $\bfphi\in \mathcal D(\Omega_h)$.
It is crucial to control the terms
$$\ioh(\bfa\cdot\nabla\bfa)\bfphi  = \int_{\Omega_d}(\bfa\cdot\nabla\bfa)\bfphi + \int_{\Omega_\infty}(\bfa\cdot\nabla\bfa)\bfphi$$
and
$$\ioh\Delta\bfa\cdot\bfphi = \int_{\Omega_d}\Delta\bfa\cdot\bfphi + \int_{\Omega_\infty}\Delta\bfa\cdot\bfphi.$$
This can be clearly done since $\bfa\cdot\nabla\bfa=0$ in $\Omega_\infty$ and
$$\int_{\Omega_\infty}\Delta\bfa\cdot\bfphi = -\int_{\Omega_\infty}\nabla\Pi\cdot \bfphi =  0.$$

When dealing with problem \eqref{NS}-\eqref{compatibility-torque}, we consider an open ball $\mathcal B$ in the channel $\mathbb R\times(-L,L)$ that contains $B_\theta$ for every $\theta\in[0,2\pi]$. Then we argue as in the previous case to construct 
$\bfb\in H^2_{loc}(\Omega_h)$ such that $\bfb=(L^2-x_2^2)\bfe_1$ for $|x_1|\ge 4d$ and $\bfb=0$ in $\mathcal B$.

\setcounter{lemma}{1}
\begin{lemma}\label{boundsolution}
There exists a constant $\gamma_0>0$ independent of $h\in (-L+1,L-1)$ and of $\theta\in(-\frac\pi2,\frac\pi2)$ such that if $\calr\cdot\lambda<\gamma_0$, the problem \eqref{NS} admits a  weak solution $\bfu=\bfu(h)$ (resp.\ $\bfu=\bfu(\theta)$ when $\Omega_h$
is replaced by $\Omega_\theta$). Moreover, there exists $C=C(\mathcal R,\lambda,L)>0$ (independent of $h$ and $\theta$),
with $C\to 0$ as $(\mathcal R,\lambda)\to 0$, such that
\begin{equation}\label{stima}
\|\nabla \left(\bfu-\lambda\bfa\right)\|_{2,\Omega_h}\le C\qquad\forall\, h\in(-L+1,L-1),
\end{equation}
\begin{equation}\label{stima2}
\mbox{resp. }\quad\|\nabla \left(\bfu-\lambda\bfb\right)\|_{2,\Omega_\theta}\le C\qquad\forall\, \theta\in\left(-\frac\pi2,\frac\pi2\right).
\end{equation}
This solution is also unique in the class of weak solutions, provided $\calr\cdot\lambda$ and $\lambda$ are below a certain constant depending only on $L$.
Moreover, $\bfu(h)$ (resp.\ $\bfu(\theta)$ when $\Omega_h$
is replaced by $\Omega_\theta$) is $C^\infty(\Omega_h)$ and there exits a pressure field $p\in C^\infty(\Omega_h)$ such that \eqref{NS} holds in a classical sense. 
\end{lemma}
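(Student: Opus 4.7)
The plan is to obtain the solution in the form $\bfu = \bfv + \lambda\bfa$ (respectively $\bfu=\bfv+\lambda\bfb$) with $\bfv\in H^1_0(\Omega_h)$, so that the boundary and decay conditions are absorbed into the extension $\bfa$ already constructed, and $\bfv$ solves a modified Navier--Stokes problem with homogeneous boundary data. I would test the weak formulation displayed above with $\bfphi=\bfv$ to derive an a priori bound on $\|\nabla\bfv\|_{2,\Omega_h}$, then pass from a priori bound to existence through a Galerkin scheme on an invading sequence of truncated domains $\Omega_h\cap(-R,R)\times(-L,L)$, exactly as in \cite[Ch.~IX]{galdibook}.

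The heart of the argument is the energy estimate. Plugging $\bfphi=\bfv$ into the weak formulation, the inertial term $\calr\ioh(\bfv\cdot\nabla\bfv)\cdot\bfv$ vanishes by the divergence-free condition and zero trace of $\bfv$. The remaining dangerous contributions are
\[
\calr\lambda\ioh(\bfa\cdot\nabla\bfv)\cdot\bfv,\qquad \lambda\ioh(\bfv\cdot\nabla\bfa)\cdot\bfv,
\]
together with the two inhomogeneous source terms coming from $\bfa\cdot\nabla\bfa$ and $\Delta\bfa$. For the latter two I would use the crucial property emphasized in the construction, namely $\bfa\cdot\nabla\bfa\equiv 0$ and $\Delta\bfa=\nabla\Pi$ in $\Omega_\infty$, so that these contributions reduce to integrals on the compact set $\Omega_d$ that are uniformly bounded in $h$ and $\theta$ and vanish linearly in $\lambda$. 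For the two bilinear terms I would combine H\"older's inequality with the Sobolev/Poincar\'e inequality in the strip $\mathbb{R}\times(-L,L)$ (valid with a constant depending only on $L$, because $\bfv$ has zero trace on $\Gamma$), using the uniform pointwise and gradient bounds on $\bfa$ that follow from the explicit construction (the only $h$-dependent piece is the bounded solution $\bfz\in H^2_0(\Sigma)$ of the Bogovski\u\i{} problem, whose norm is controlled independently of $h$ by the $H^1$-norm of $\zeta'(x_1)(L^2-x_2^2)$). This yields an estimate of the form
\[
\|\nabla\bfv\|_{2,\Omega_h}^{2}\le c_L\bigl(\calr\lambda+\lambda\bigr)\|\nabla\bfv\|_{2,\Omega_h}^{2}+c_L\lambda\bigl(1+\lambda+\calr\lambda\bigr)\|\nabla\bfv\|_{2,\Omega_h},
\]
which, under the smallness condition $c_L(\calr\lambda+\lambda)<1/2$, gives \eqref{stima} with a constant $C=C(\calr,\lambda,L)$ that is independent of $h$ and tends to zero as $(\calr,\lambda)\to 0$. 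The identical estimate in the torsional case follows from the analogous properties of $\bfb$, the fact that $\mathcal B$ does not depend on $\theta$ being what guarantees uniformity in $\theta$.

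Existence then follows by a Galerkin approximation on the truncated domains: on each truncation Brouwer's fixed-point theorem combined with the a priori estimate yields an approximate solution, and the uniform bound allows us to extract a weakly converging subsequence in $H^1_0$, whose limit is the desired weak solution (the associated pressure being recovered from de Rham's theorem). For uniqueness I would take two weak solutions $\bfu_1,\bfu_2$ with the same $h$, set $\bfw=\bfu_1-\bfu_2\in H^1_0(\Omega_h)$, test the difference of the weak formulations with $\bfw$, and reproduce the same absorption argument, this time using in addition that $\|\nabla(\bfu_i-\lambda\bfa)\|_2\le C$ is itself small; the resulting inequality forces $\nabla\bfw\equiv 0$ when $\calr\lambda$ and $\lambda$ are below a threshold depending only on $L$. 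Interior $C^\infty$-regularity of $\bfu$ and the existence of a smooth pressure field are a direct application of the standard interior regularity theory for the stationary Navier--Stokes equations (see \cite[Ch.~X]{galdibook}), since $\Omega_h,\Omega_\theta$ are smooth in their interior.

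I expect the main obstacle to be making every constant in the energy estimate genuinely independent of $h$ and $\theta$. The Sobolev constant in the strip is inherently uniform, so the only risk lies in the norms of $\bfa$ and $\bfb$; one must therefore verify that the Bogovski\u\i{} corrector $\bfz$ in the fixed domain $\Sigma$ (and its analogue for $\mathcal B$) has $H^2$-norm controlled by a quantity depending only on $L,d,\tau$, and not on the precise position of the obstacle inside $\Sigma\setminus B_h$. Choosing $\tau$ as a fixed positive constant independent of $h$, at the price of enlarging the region where $\bfa$ is modified, resolves this issue in the translational case; for the rotational case the ball $\mathcal B$ plays the corresponding role.
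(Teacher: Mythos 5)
Your proposal is correct and follows essentially the same route as the paper: the decomposition $\bfu=\bfv+\lambda\bfa$ (resp.\ $\bfv+\lambda\bfb$), the energy estimate obtained by testing with $\bfv$ and exploiting that $\bfa\cdot\nabla\bfa=0$ and $\Delta\bfa=\nabla\Pi$ in $\Omega_\infty$ so that the source terms localize to $\Omega_d$, uniformity in $h$ and $\theta$ via the fixed Bogovski\u\i{} domain $\Sigma$ (resp.\ the ball $\mathcal B$), and then the classical Galerkin/invading-domains machinery of \cite{galdibook} for existence, uniqueness and regularity. The only cosmetic difference is that your smallness condition reads $c_L(\calr\lambda+\lambda)<1/2$ rather than the paper's $C_1\calr\lambda<1$, because the cross term $\bfv\cdot\nabla\bfa$ actually carries the factor $\calr\lambda$ (it originates from $\calr\,\bfu\cdot\nabla\bfu$); this only makes your hypothesis marginally stronger and does not affect the argument.
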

\begin{proof}
We deal only with the problem \eqref{NS}, with $\bfu$ defined in  $\Omega_h$. The case $\bfu=\bfu(\theta)$ in $\Omega_\theta$
is similar.
It is enough to show the validity of the {\em a priori} estimate in \eqref{stima} and \eqref{stima2}.
In fact, this will allow us to prove the stated properties by using the same (classical) arguments given in \cite[Section XIII.3]{galdibook}.

Assume $0\le h\le L-1$. The complementing case follows by symmetry. 
Write $\bfv=\bfu-\lambda\bfa$ so that also $\bfv$ is solenoidal and satisfies (in the weak sense as above)
$$
\Delta\bfv-\nabla p
=\calr\Big[\bfv\cdot\nabla\bfv+\lambda\,\big(\bfa\cdot\nabla\bfv+\bfv\cdot\nabla\bfa+\lambda\,\bfa\cdot\nabla\bfa\big)\Big]-\lambda\,\Delta\bfa\quad
\mbox{in }\Omega_h
$$
with $\bfv=0$ on $\Gamma\cup \partial B$ and $\bfv\to0$ as $|x_1|\to\infty$.

Taking $\bfv$ as test function in the weak formulation, which, according to the Galerkin method, can be assumed to have compact support,  we formally derive the following identity
\begin{eqnarray*}
\|\nabla\bfv\|_2^2 &=& -\calr\ioh\Big[\bfv\cdot\nabla\bfv+\lambda\,\big(\bfa\cdot\nabla\bfv+\bfv\cdot\nabla\bfa+\lambda\bfa\cdot\nabla\bfa\big)\Big]\bfv
-\lambda\int_{\Omega_h}\nabla\bfa:\nabla\bfv\\
\ &=& -\calr\lambda\ioh\big(\bfv\cdot\nabla\bfa\big) v -\calr\lambda^2\int_{\Omega_d}\,\big(\bfa\cdot\nabla\bfa\big)\bfv-\lambda\int_{\Omega_d}\nabla\bfa:\nabla\bfv\\
\end{eqnarray*}
We have used the fact that
$$\ioh\Big[\bfv\cdot\nabla\bfv\Big]\bfv = \ioh\Big[\bfa\cdot\nabla\bfv\Big]\bfv =0$$
when using a Galerkin scheme. 

Now we estimate
$\ioh\big(\bfv\cdot\nabla\bfa\big) v$ and $\int_{\Omega_d}\,\big(\bfa\cdot\nabla\bfa\big)\bfv$.
For the first, we have
$$\left|\ioh\big(\bfv\cdot\nabla\bfa\big) v\right|\le \|\nabla\bfa\|_{L^\infty(\Omega_\infty)}\|\bfv\|_2^2 + \|\nabla\bfa\|_{L^2(\Omega_d)}\|\bfv\|_4^2\le C_1 \|\nabla\bfv\|_2^2$$
using Ladyzhenskaya and Poincar\'e inequalities. For the second, we have
$$\int_{\Omega_d}\,\big(\bfa\cdot\nabla\bfa\big)\bfv\le \|\bfa\|_{L^2(\Omega_d)} \|\nabla\bfa\|_{L^2(\Omega_d)}\|\bfv\|_4\le C_2 \|\nabla\bfv\|_2.$$
Summing up, we have derived the estimate
$$\|\nabla\bfv\|_2^2\le C_1\calr\,\lambda\|\nabla\bfv\|_2^2+ C_2 \calr\,\lambda^2\|\nabla\bfv\|_2+\lambda\|\nabla\bfa\|_{L^2(\Omega_d)}\|\nabla\bfv\|_2$$
Hence, simplifying by $\|\nabla\bfv\|_2$ and taking $\calr\cdot\lambda$ small,
we obtain
$$
\|\nabla\bfv\|_2\le  \frac{1}{1-C_1\calr\,\lambda}\left(C_2\calr\lambda^2
+\lambda\|\nabla\bfa\|_{L^2(\Omega_d)} \right).
$$
\end{proof}

Since the two problems considered have slightly different proof, we now analyze them separately.
Let us first deal with {\bf the fluid-structure problem \eqref{NS}-\eqref{compatibility}} for which
we consider the following auxiliary Stokes problem, first introduced in \cite[(2.15)]{ho}:
\begin{equation}\label{stokes}
\ba{cc}\medskip
\Div\bfT(\bfw,P)=0 ,\quad \Div\bfw=0\quad\mbox{in $\Omega_h$}\\ \ms
\bfw\left|\right._{\partial B_h}=\bfe_2 ,\qquad\bfw\left|\right._\Gamma=\Lim{|x_1|\to\infty}\bfw(x_1,x_2)=0 .
\ea
\end{equation}
Note that \eqref{stokes} admits a unique solution that we denote by $\bfw$ which, in fact, depends on $h$: $\bfw=\bfw(h)$.
We prove an a priori bound for this solution.

\begin{lemma}\label{boundw}
For any $h\in(-L+1,L-1)$ let $\varepsilon:=(|L-1|-|h|)/2$ $(\le 1)$. Moreover, denote by
$\bfw=\bfw(h)$ the unique weak solution to \eqref{stokes}. Then, there is a positive constant $c$, independent of $\varepsilon$, such that
\begin{equation}\label{oggi2}
\|\nabla\bfw\|_{2,\Omega_h}\le c\,\varepsilon^{-\frac32}.\end{equation}
\end{lemma}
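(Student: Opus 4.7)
By the classical variational characterization of the Stokes system, the unique weak solution $\bfw=\bfw(h)$ of \eqref{stokes} minimizes the Dirichlet integral among all solenoidal fields in $H^1(\Omega_h)$ that share its boundary values. It therefore suffices to exhibit a single admissible extension $\bfW$ with $\|\nabla\bfW\|_{2,\Omega_h}\le c\,\varepsilon^{-3/2}$, and \eqref{oggi2} follows at once.

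To enforce $\Div\bfW=0$ automatically I would look for $\bfW$ as the skew-gradient $\bfW=\nabla^\perp\psi:=(-\partial_{x_2}\psi,\partial_{x_1}\psi)$ of a scalar stream function $\psi$. Translating the boundary data, $\bfW=\bfe_2$ on $\partial B_h$ is equivalent to the Cauchy data $\psi=x_1$ and $\nabla\psi=\bfe_1$ on $\partial B_h$, while $\bfW=0$ on $\Gamma$ and at infinity amounts to $\psi=\nabla\psi=0$ there. A natural ansatz is
$$\psi(\bfx)=x_1\,\eta(\bfx),$$
where $\eta$ is a smooth cutoff with $\eta\equiv1$ in a thin tubular neighbourhood of $\partial B_h$ (so, in particular, $\nabla\eta\equiv0$ on $\partial B_h$), and $\eta\equiv0$ in a neighbourhood of $\Gamma$ and for $|x_1|\ge 4d$.

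The only region where the construction of $\eta$ is delicate in $\varepsilon$ is the narrow upper gap
$$G_\varepsilon:=(-d,d)\times(1+h,\,1+h+2\varepsilon),$$
across which $\eta$ must descend from $1$ to $0$ over a slab of thickness $2\varepsilon$ with vanishing derivatives at both ends. Choosing inside $G_\varepsilon$ the Hermite-cubic profile
$$\eta(x_2)=1-3\Bigl(\tfrac{x_2-1-h}{2\varepsilon}\Bigr)^{\!2}+2\Bigl(\tfrac{x_2-1-h}{2\varepsilon}\Bigr)^{\!3},$$
one has $|\eta'|=O(\varepsilon^{-1})$ and $|\eta''|=O(\varepsilon^{-2})$. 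Since $\nabla\bfW$ consists of second derivatives of $\psi=x_1\eta$, a direct expansion gives $|\nabla\bfW|^2\lesssim|\nabla\eta|^2+x_1^2|D^2\eta|^2$ on $G_\varepsilon$, so that
$$\int_{G_\varepsilon}|\nabla\bfW|^2\,\lesssim\,|G_\varepsilon|\cdot\bigl(\varepsilon^{-2}+d^2\varepsilon^{-4}\bigr)\,\lesssim\,d\,\varepsilon^{-1}+d^3\varepsilon^{-3}\,\lesssim\,\varepsilon^{-3}.$$
On the complementary pieces (the wider lower gap of thickness $L-1+h\ge L-1$, the lateral strips $d<|x_1|<4d$, and the matching neighbourhoods of the two vertical faces of $B_h$) the cutoff $\eta$ can be built with $O(1)$ first and second derivatives, hence contributes an $\varepsilon$-independent amount. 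Summing pieces yields $\|\nabla\bfW\|_{2,\Omega_h}^2\le c\,\varepsilon^{-3}$, i.e.\ \eqref{oggi2}.

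The principal technical obstacle I anticipate is the smooth gluing of the local profiles across the four corners of the rectangle $\partial B_h$, where the cubic built inside $G_\varepsilon$ must meet an $O(1)$ extension on the lateral sides while preserving the Cauchy conditions $(\eta,\nabla\eta)=(1,0)$ on two perpendicular faces meeting at a point. I would deal with this by first thickening the set $\{\eta\equiv 1\}$ to a collar of small $\varepsilon$-independent width around $\partial B_h$ (so the Cauchy data extend trivially off the boundary), and then completing the transition by means of a partition of unity subordinate to an edge-wise cover; the resulting constants depend only on $d$ and $L$ and do not spoil the $\varepsilon^{-3/2}$ scaling coming from $G_\varepsilon$.
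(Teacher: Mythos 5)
Your proposal is correct and follows essentially the same route as the paper: your extension $\bfW=\nabla^{\perp}(x_1\eta)$ is exactly the paper's $\bfPhi=-{\rm curl}\,(x_1\phi\,\bfe_3)$ built from a cutoff equal to $1$ near $B_h$ with an $O(\varepsilon)$ transition in the narrow gap (giving $\|\nabla\bfPhi\|_2^2=O(\varepsilon^{-3})$), and your appeal to the Dirichlet minimization principle is equivalent to the paper's step of testing the Stokes system with $\bfw-\bfPhi$ to obtain $\|\nabla\bfw\|_2\le\|\nabla\bfPhi\|_2$. The only cosmetic caveat is that an ``$\varepsilon$-independent collar'' on which $\eta\equiv1$ cannot fit inside the gap of width $2\varepsilon$; the paper takes the collar and transition widths proportional to $\varepsilon$ on all sides of $B_h$, which also makes the corner gluing trivial.
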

\begin{proof} Fix $h\in(-L+1,L-1)$ and, for any $0<a<2\varepsilon$ we set
$$
\omega_a:=\{(x_1,x_2)\in(-d-a,d+a)\times (h-1-a,h+1+a)\}\,.
$$
Let $\phi$ be a (smooth) cut-off function such that
$$
\phi(\bfx)=\left\{\begin{array}{ll}
1\ & \mbox{in }\omega_{\varepsilon/2}\\
0\ & \mbox{in }\Omega_h\setminus\omega_\varepsilon\,.
\end{array}\right.\,.
$$
and set
\begin{equation}
\label{FG0}\bfPhi(\bfx)=-{\rm curl}\,\big(x_1\phi(\bfx)\,\bfe_3\big)\,.
\end{equation}
Clearly, $\Div\bfPhi=0$ and since $(\partial_i\equiv\partial/\partial x_i$)
$$
\bfPhi(\bfx)=\bfe_3\times\nabla\big(x_1\phi(\bfx)\big)=x_1(-\partial_2\phi(\bfx)\,\bfe_1+\partial_1\phi(\bfx)\,\bfe_2)+\phi(\bfx)\,\bfe_2\,,
$$
by the property of $\phi$ we deduce $\bfPhi(\bfx)=\bfe_2$ for all $\bfx\in \partial B$. Therefore, $\bfPhi$ is a solenoidal extension of $\bfe_2$ with support contained in $\Omega_\varepsilon$.
Also, by a straightforward argument it follows that
\begin{equation}\label{oggi1}
\|\bfPhi\|_{2,\omega_\varepsilon}\le c_0\,\varepsilon^{-\frac12}\,,\ \
\|\nabla\bfPhi\|_{2,\omega_\varepsilon}\le c_0\,\varepsilon^{-\frac12}\left(1+\varepsilon^{-1}\right)\,,
\end{equation}
with $c_0>0$ independent of $\varepsilon$.
We now multiply both sides of \eqref{stokes} by $\bfw-\bfPhi$ and integrate over $\Omega_h$ to obtain
$$
0=\ioh \Div\bfT(\bfw,P)\cdot \big(\bfw-\bfPhi\big)=-\ioh|\nabla\bfw|^2+\int_{\omega_\varepsilon}
\bfT(\bfw,P):\nabla\bfPhi
$$
which yields
$$
\ioh|\nabla\bfw|^2=\int_{\omega_\varepsilon}\bfT(\bfw,P):\nabla\bfPhi=\int_{\omega_\varepsilon}\nabla\bfw :\nabla\bfPhi-
\int_{\omega_\varepsilon}P\,\Div\bfPhi=\int_{\omega_\varepsilon}\nabla\bfw :\nabla\bfPhi.
$$
In turn, the latter, with the help of \eqref{oggi1}, gives ($\varepsilon\le1$)
$$
\|\nabla\bfw\|^2_{2,\Omega_h}\le c_0\,\|\nabla\bfw\|_{2,\omega_\varepsilon}\Big(\varepsilon^{-\frac12}+
\varepsilon^{-\frac32}\Big) \le 2 c_0\,\varepsilon^{-\frac32}\,\|\nabla\bfw\|_{2,\omega_\varepsilon}\le 2 c_0\,\varepsilon^{-\frac32}\,\|\nabla\bfw\|_{2,\Omega_h}\
$$
which proves \eqref{oggi2}.\end{proof}

Let us now show that the lift can be computed through an alternative formula containing an integral over $\Omega_h$ that involves $\bfw$.

\begin{lemma}\label{lift-poi}
Let $\bfu$ be the solution of \eqref{NS} and $\bfw$ be defined by
\eqref{stokes}. The lift on $B_h$ (free to move vertically) exerted by the fluid governed by \eqref{NS} can be also computed as
\begin{equation}\label{new-lift}
\bfe_2\cdot\ibh \bfT(\bfu,p)\cdot\bfn=\calr\ioh \bfu\cdot \nabla\bfu\cdot \bfw.
\end{equation}
\end{lemma}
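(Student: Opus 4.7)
The plan is to derive \eqref{new-lift} via a Green-type identity, testing the Navier--Stokes momentum equation for $\bfu$ against $\bfw$ and the Stokes momentum equation for $\bfw$ against $\bfu$, and reconciling the two. Since $\Omega_h$ is unbounded, I would first work on the truncation $\Omega_{h,R}:=\Omega_h\cap\{|x_1|<R\}$, whose boundary is $\partial B_h$, $\Gamma_R:=\Gamma\cap\{|x_1|<R\}$, and the two lateral cross-sections $S_R^\pm:=\{\pm R\}\times[-L,L]$; the final identity will follow by letting $R\to\infty$.

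First I would multiply $\Div\bfT(\bfu,p)=\calr\,\bfu\cdot\nabla\bfu$ by $\bfw$ and integrate by parts on $\Omega_{h,R}$. Since $\bfw=0$ on $\Gamma_R$ and $\bfw=\bfe_2$ on $\partial B_h$, the only surviving boundary contributions come from $\partial B_h$ (which yields exactly the lift $\bfe_2\cdot\ibh\bfT(\bfu,p)\cdot\bfn$) and from $S_R^\pm$, producing
$$\bfe_2\cdot\ibh\bfT(\bfu,p)\cdot\bfn+\int_{S_R^\pm}\bfw\cdot\bfT(\bfu,p)\bfn-\int_{\Omega_{h,R}}\bfT(\bfu,p):\nabla\bfw=\calr\int_{\Omega_{h,R}}(\bfu\cdot\nabla\bfu)\cdot\bfw.$$
The bulk term $\int_{\Omega_{h,R}}\bfT(\bfu,p):\nabla\bfw$ can then be recycled. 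Since both $\bfu$ and $\bfw$ are solenoidal, the pressure contributions drop out and the symmetry identity $\bfT(\bfu,p):\nabla\bfw=\bfT(\bfw,P):\nabla\bfu$ holds pointwise. A second integration by parts, together with $\Div\bfT(\bfw,P)=0$ from \eqref{stokes} and the vanishing of $\bfu$ on $\partial B_h\cup\Gamma_R$, then gives
$$\int_{\Omega_{h,R}}\bfT(\bfu,p):\nabla\bfw=\int_{S_R^\pm}\bfu\cdot\bfT(\bfw,P)\bfn.$$
Substituting this into the previous identity leaves
$$\bfe_2\cdot\ibh\bfT(\bfu,p)\cdot\bfn=\calr\int_{\Omega_{h,R}}(\bfu\cdot\nabla\bfu)\cdot\bfw+\int_{S_R^\pm}\bigl(\bfu\cdot\bfT(\bfw,P)\bfn-\bfw\cdot\bfT(\bfu,p)\bfn\bigr).$$

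The main obstacle, and the reason for the truncation, is to prove that the lateral integrals vanish as $R\to\infty$. The auxiliary problem \eqref{stokes} is a homogeneous Stokes problem in the strip $\mathbb R\times(-L,L)$ whose only datum is the compactly supported boundary value $\bfe_2$ on $\partial B_h$; by classical Saint-Venant type decay estimates for Stokes flow in a strip (see \cite{galdibook}), the fields $\bfw$, $\nabla\bfw$ and the associated pressure $P$ all decay exponentially as $|x_1|\to\infty$. On $S_R^\pm$ the velocity $\bfu$ is bounded (it converges uniformly to the Poiseuille profile) and $\bfT(\bfu,p)$ grows at most linearly in $x_1$ through the Poiseuille pressure, so exponential decay of $\bfw$ and $\bfT(\bfw,P)$ is more than enough to annihilate both lateral integrals in the limit. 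The same exponential decay guarantees that $(\bfu\cdot\nabla\bfu)\cdot\bfw$ is integrable on $\Omega_h$, so the bulk integral converges to $\calr\ioh\bfu\cdot\nabla\bfu\cdot\bfw$, and passing to the limit $R\to\infty$ yields exactly \eqref{new-lift}.
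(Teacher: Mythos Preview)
Your proof is correct and follows the same Green-type reciprocity argument as the paper: test the Navier--Stokes equation for $(\bfu,p)$ with $\bfw$, test the Stokes equation for $(\bfw,P)$ with $\bfu$, and combine the two using that both fields are solenoidal. The paper integrates directly over $\Omega_h$ and handles the behavior at infinity by remarking that $P$ tends to a constant and that $\ioh\bfu\cdot\nabla P=0$, whereas you make the passage to the limit explicit via truncation and Saint-Venant exponential decay; the two presentations are equivalent.
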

\begin{proof}
Multiplying \eqref{stokes} by $\bfu$ and integrating by parts over $\Omega_h$ yields
\begin{equation}\label{eq:encoreune}
0=\ioh\bfu \cdot \Div\bfT(\bfw,P)=\ido \bfu \cdot\bfT(\bfw,P)\cdot\bfn -\ioh\nabla\bfw :\nabla\bfu.
\end{equation}
Indeed, we have
$$\ioh\bfu \cdot \nabla P = \ioh (\bfv + \lambda \bfa)\cdot \nabla P=0$$
because $\Div \bfa=0$ and $P$ tends to a constant when $x_1\to\pm \infty$, see \cite[Section VI.2 and Theorem VI.4.4]{galdibook}.
As the boundary integral vanishes in \eqref{eq:encoreune}, we obtain
\begin{equation}\label{gradientszero}
\ioh\nabla\bfw :\nabla\bfu=0 .
\end{equation}

On the other hand, if we multiply \eqref{NS} by $\bfw$ and we integrate by parts over $\Omega_h$ we get
$$
\calr \ioh\bfu\cdot\nabla\bfu\cdot\bfw=\ioh\bfw \, \Div\bfT(\bfu,p)=\ido \bfw \cdot\bfT(\bfu,p)\cdot\bfn-\ioh\nabla\bfw :\nabla\bfu.
$$
By \eqref{gradientszero} and since $\bfw\left|\right._\Gamma=0$ and $\bfw\left|\right._{\partial B_h}=\bfe_2$, we then get \eqref{new-lift}.
\end{proof}

Lemma \ref{boundsolution} enables us to construct a map $\mathbb{R}^2\to\mathbb{R}$ as follows. For $(h,\calr)\in(-L+1,L-1)\times[0,\gamma_0)$ let
\begin{equation}\label{uhR}
\bfu=\bfu(h,\calr)
\end{equation}
be the unique solution of \eqref{NS}. Then we define
$$
\psi(h,\calr):=f(h)+\bfe_2\cdot\ibh \bfT\big(\bfu(h,\calr),p\big)\cdot\bfn
$$
in which also $\partial B$ depends on $h$ through the position of $B$. Obviously,
$$
(\bfu(h,\calr),h)\mbox{ solves \eqref{NS}-\eqref{compatibility} if and only if }\psi(h,\calr)=0.
$$
Hence, we may rephrase Theorem \ref{unique} as follows:
\begin{equation}\label{claim}
\exists\calr_0>0\quad{s.t.}\quad \psi(h,\calr)=0\ \Longleftrightarrow\ h=0\quad\forall\calr<\calr_0.
\end{equation}
Our purpose then becomes to prove \eqref{claim}. In order to apply the Implicit Function Theorem we need some regularity of the function $\psi$.

\begin{lemma}\label{psiregular}
We have that $\psi\in C^1(-L+1,L-1)\times[0,\gamma_0)$.
\end{lemma}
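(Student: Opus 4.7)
The plan is to use \lemmref{lift-poi} to rewrite
$$
\psi(h,\calr)=f(h)+\calr\ioh \bfu(h,\calr)\cdot\nabla\bfu(h,\calr)\cdot\bfw(h),
$$
turning the boundary integral into a volume integral that is much more tractable. The obstacle is that both the integration domain $\Omega_h$ and the integrand depend on $h$, so differentiation under the integral is not direct.

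To remove the $h$-dependence of the domain, I would fix a reference value $h_0\in(-L+1,L-1)$ and introduce a smooth family of diffeomorphisms $\Psi_h\colon\Omega_{h_0}\to\Omega_h$ of the form $\Psi_h(\bfy)=\bfy+(h-h_0)\chi(\bfy)\bfe_2$, with $\chi\in C^\infty_c(\mathbb R\times(-L,L))$ equal to $1$ in a neighborhood of $B_{h_0}$ and vanishing near $\Gamma$ and for $|x_1|\ge 4d$. For $|h-h_0|$ small, $\Psi_h$ is a $C^\infty$ diffeomorphism with positive Jacobian, equal to the identity outside a bounded set. Via the Piola transform $\tilde{\bfu}(\bfy)=(\det\nabla\Psi_h)^{-1}(\nabla\Psi_h)^{-1}\bfu(\Psi_h(\bfy))$, which preserves solenoidality and the homogeneous boundary data, the Navier--Stokes system \eqref{NS} on $\Omega_h$ is recast as a quasilinear elliptic problem on the fixed domain $\Omega_{h_0}$ with coefficients depending polynomially on $h$; the same reduction converts \eqref{stokes} into a linear problem on $\Omega_{h_0}$ with $C^\infty$ $h$-dependence. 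Note that the extension $\bfa$ introduced before \lemmref{boundsolution} can be chosen once and for all in a neighborhood of $h_0$, since the parameter $\tau$ used in its construction is stable under small variations of $h$.

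Next, I would apply the Implicit Function Theorem in the Hilbert space of solenoidal $H^1_0(\Omega_{h_0})$-fields to obtain $C^1$ dependence of $(h,\calr)\mapsto\tilde\bfu$ and $h\mapsto\tilde\bfw$. The required invertibility of the linearization at the reference solution in the variable $\bfv=\tilde\bfu-\lambda\tilde\bfa$ is exactly the coercivity estimate that proved \lemmref{boundsolution}, valid when $\calr\lambda<\gamma_0$. For $\tilde\bfw$, the linearization is the classical Stokes operator on $\Omega_{h_0}$, which is always invertible.

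Finally, by the same change of variables,
$$
\ioh \bfu\cdot\nabla\bfu\cdot\bfw=\int_{\Omega_{h_0}}\mathcal{F}(\bfy;h,\tilde\bfu,\nabla\tilde\bfu,\tilde\bfw)\,d\bfy
$$
with $\mathcal F$ smooth in all its arguments and $C^\infty$ in $h$ through the Jacobian factors. Since $(\tilde\bfu,\tilde\bfw)$ is $C^1$ in $(h,\calr)$ into $H^1$ and $f\in C^1$, differentiation under the integral, justified by the uniform $H^1$-bounds of \lemmref{boundsolution} together with Sobolev embeddings in dimension two, yields $\psi\in C^1$. The hard part will be ensuring that the pullback preserves the structure used to derive $h$-independent energy estimates on the unbounded strip; this is why $\chi$ must be chosen with compact support, so that outside a bounded set $\Psi_h$ is the identity and the Poiseuille extension at infinity is unaltered.
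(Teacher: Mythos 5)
Your plan is essentially the approach the paper has in mind: its proof of this lemma simply defers to the classical shape-variation arguments of \cite{henrot}, \cite{galdih} and \cite{bello}, which are precisely the transport-to-a-fixed-reference-domain construction (a compactly supported vertical deformation field, Piola pullback preserving solenoidality, Implicit Function Theorem using invertibility of the linearized operator for small $\calr\lambda$, and differentiation under the integral in the volume representation of the lift) that you outline. Your sketch is a correct and faithful expansion of what the cited references do, so no substantive discrepancy to report.
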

\begin{proof} It can be obtained by following classical arguments from shape variation \cite{henrot}, adapted to our particular context
where the domain variation has only one degree of freedom, the vertical displacement of $B$. See \cite{galdih} for a slightly different
problem and \cite{bello} for a similar statement (under mere Lipschitz regularity of the boundary) in the case of the drag force.\end{proof}

Then, by the symmetry of the problem \eqref{NS} in $\Omega_0$, we infer that
\begin{equation}\label{f0}
\psi(0,\calr)=0\qquad \forall\calr<\calr_0.
\end{equation}
Incidentally, we observe also that the components of $\bfw$ enjoy the symmetries
$$
w_1(x_1,x_2)=-w_1(-x_1,x_2) \quad \text{ and }\quad w_2(x_1,x_2)=w_2(-x_1,x_2)\, .
$$

Lemma \ref{lift-poi} enables us to rewrite $\psi$ as
\begin{equation}\label{psi}
\psi(h,\calr):=f(h)+\calr\ioh\bfu(h,\calr)\cdot \nabla\bfu(h,\calr)\cdot \bfw(h)
\end{equation}
that will enable us to replace bounds on the pressure in possible boundary layers with bounds on the auxiliary function $\bfw(h)$.
The next step is to prove the following statement.

\begin{lemma}\label{fpositive}
Let $\psi$ be as in \eqref{psi}. There exists $\overline{\calr}>0$ such that $\psi(h,\calr)>0$ for all
$(h,\calr)\in(0,L-1)\times(0,\overline{\calr})$ and $\psi(h,\calr)<0$ for all $(h,\calr)\in(-L+1,0)\times(0,\overline{\calr})$.
\end{lemma}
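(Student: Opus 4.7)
The starting point is to use Lemma \ref{lift-poi} to rewrite
$$\psi(h,\calr) = f(h) + \calr\, I(h,\calr), \qquad I(h,\calr):=\ioh \bfu(h,\calr)\cdot\nabla\bfu(h,\calr)\cdot\bfw(h),$$
so that the task becomes comparing the elastic term with the hydrodynamic correction. Recall that $\psi(0,\calr)=0$ by \eqref{f0}, that $f(h)$ has the same sign as $h$ (so $f(0)=0$), and that $\bfu$ and $\bfw$ are controlled by Lemmas \ref{boundsolution} and \ref{boundw}. I carry out the argument for $h\in(0,L-1)$ (where we must show $\psi>0$); the case $h\in(-L+1,0)$ is a verbatim repetition with signs reversed, since $f(h)<0$ there and the bounds on $I$ are insensitive to the sign of $h$.

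The crucial quantitative step is to prove an estimate of the form
$$|I(h,\calr)|\le K(\calr,\lambda)\,\varepsilon^{-3/2}, \qquad \varepsilon:=(L-1-|h|)/2,$$
with $K(\calr,\lambda)\to 0$ as $(\calr,\lambda)\to 0$. To obtain it, I decompose $\bfu=\bfv+\lambda\bfa$ with $\bfv\in H^1_0(\Omega_h)$, expand $I$ into its four pieces, and in each piece containing $\bfv$ as a factor I integrate by parts (using $\Div\bfv=\Div\bfa=0$ and the vanishing of $\bfv$ on $\partial\Omega_h$) so that every occurrence of $\bfw$ is paired with a derivative. Then Hölder together with the 2D Ladyzhenskaya inequality and Poincaré in the strip, combined with the uniform gradient bound on $\bfv$ from Lemma \ref{boundsolution}, the fact that $\bfa\cdot\nabla\bfa$ is supported in $\Omega_d$, and the estimate $\|\nabla\bfw\|_{2,\Omega_h}\le c\,\varepsilon^{-3/2}$ from Lemma \ref{boundw}, give the displayed bound.

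Given this bound, I split $(0,L-1)$ into three pieces. \textbf{Near the wall:} by the strong-force condition \eqref{ff}, $f(h)\,\varepsilon^{3/2}\to+\infty$ as $h\to L-1$, so one can fix $\eta>0$ such that $f(h)>2\calr K\varepsilon^{-3/2}$ for every $h\in[L-1-\eta,L-1)$, hence $\psi(h,\calr)>0$ there. \textbf{Bulk:} on a compact subinterval $[\alpha,L-1-\eta]$ with $\alpha>0$ to be chosen, $\varepsilon$ is bounded away from $0$, so $|I(h,\calr)|$ is uniformly bounded by some $M_\alpha$, while $f(h)\ge f(\alpha)>0$; imposing $\calr M_\alpha<f(\alpha)$ yields $\psi>0$. \textbf{Near the origin:} by Lemma \ref{psiregular}, $\partial_h\psi$ is continuous on $(-L+1,L-1)\times[0,\gamma_0)$, and $\partial_h\psi(0,0)=f'(0)>0$; hence there exist $\alpha>0$ and $\overline\calr'>0$ such that $\partial_h\psi>0$ on $(-\alpha,\alpha)\times[0,\overline\calr')$, which together with $\psi(0,\calr)=0$ forces $\psi(h,\calr)>0$ on $(0,\alpha)\times[0,\overline\calr')$. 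Taking $\overline\calr$ to be the minimum of the three thresholds finishes the proof.

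I expect the main obstacle to be the estimate on $I(h,\calr)$: the integrand couples a flow $\bfu$ that only tends to the non-trivial Poiseuille profile at infinity with a field $\bfw$ whose sole uniform-in-$h$ control is on $\|\nabla\bfw\|_2$, not on $\bfw$ itself. The decomposition $\bfu=\bfv+\lambda\bfa$ and the careful integration by parts are precisely what transfers the derivative onto $\bfw$, so that Lemma \ref{boundw} becomes applicable and yields the $\varepsilon^{-3/2}$ rate that matches the strong-force condition \eqref{ff}.
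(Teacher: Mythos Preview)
Your proof is correct and follows essentially the same three-region strategy as the paper: the bound $|I(h,\calr)|\le C\,\varepsilon^{-3/2}$ obtained from Lemmas \ref{boundsolution} and \ref{boundw} handles the near-wall region through the strong-force condition \eqref{ff}, local analysis at $(0,0)$ using $\partial_h\psi(0,0)=f'(0)>0$ handles a neighborhood of the origin, and a continuity/compactness argument covers the intermediate range. The only cosmetic differences are that the paper invokes the Implicit Function Theorem rather than the direct positivity of $\partial_h\psi$ near the origin, and that it controls $\|\bfw\|_4$ via Poincar\'e in the strip (using $\bfw|_\Gamma=0$) and Gagliardo--Nirenberg instead of your integration-by-parts device to transfer the derivative onto $\bfw$; both routes lead to the same estimate.
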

\begin{proof} The proof is divided in three parts: first we analyze the case where $|h|$ is
close to $0$, then the case where $|h|$ is close to $L-1$, finally the case where $|h|$ is bounded away from both $0$ and $L-1$.\par
For the case when $|h|$ is small, we remark that Lemma \ref{lift-poi} has an important consequence for a creeping flow, i.e.\ when
$\calr=0$, as $\bfu(h,0)$, see \eqref{uhR}, does not produce any lift whatever $h$ is. In terms of the function $f$, defined in \eqref{ff},
this means that
\begin{equation}\label{first}
\psi(h,0)=f(h)\qquad\forall|h|<L-1.
\end{equation}

In particular, Lemma \ref{psiregular} and \eqref{first} show that $\partial_h\psi(0,0)=f'(0)>0$ which, combined with the Implicit Function
Theorem and with \eqref{f0}, proves that there exists $\gamma_1>0$ such that
\begin{equation}\label{epsilon}
0<h,\calr<\gamma_1\ \Rightarrow\ \Big(\psi(h,\calr)=0\ \Leftrightarrow\ h=0\Big).
\end{equation}

When $|h|$ is close to $L-1$, the uniform bound for $\bfu(h,\calr)$ in Lemma \ref{boundsolution} and \eqref{oggi2} show that there exists
$\overline{C}>0$ (independent of $h$ and $\calr$, provided that $\calr$ satisfies the smallness condition in Lemma \ref{boundsolution}) such that
\begin{eqnarray*}
\left|\calr\ioh \bfu(h,\calr)\cdot \nabla\bfu(h,\calr)\cdot \bfw(h)\right|
& = & \left|\calr\ioh (\bfv+\lambda\bfa)\cdot \nabla(\bfv+\lambda\bfa)\cdot \bfw(h)\right|\\
&  = & \left|\calr\ioh (\bfv+\lambda\bfa)\cdot \nabla(\bfv+\lambda\bfa)\cdot \bfw(h)\right|\\
&  = & \left|\calr\ioh (\bfv\cdot \nabla\bfv+\lambda\bfv\cdot\nabla\bfa + \lambda\bfa\cdot \nabla\bfv+\lambda^2\bfa\cdot \nabla\bfa)\cdot \bfw(h)\right|\\
&  \le & C\|\nabla\bfw\|_{2,\Omega_h}\le \frac{\overline{C}}{(|L-1|-|h|)^\frac32}
\end{eqnarray*}
for some $C>0$ which depends on the embedding constant for $H^1(\Omega_h)\subset L^4(\Omega_h)$: since $\Omega_h$ is contained in a strip,
the Poincar\'e inequality enables us to bound $L^2$ norms in terms of Dirichlet norms and, then, the Gagliardo-Nirenberg inequality enables
us to bound also $L^4$ norms in terms of the Dirichlet norms. On the other hand, by \eqref{ff} we know that there exists $\eta>0$ such that
$$
|f(h)|>\frac{2\overline{C}}{(|L-1|-|h|)^\frac32}\qquad\forall|h|>L-1-\eta\, .
$$
By inserting these two facts into \eqref{psi} we see that
\begin{equation}\label{limitpsi}
|\psi(h,\calr)|\ge\frac{\overline{C}}{(|L-1|-|h|)^\frac32}\qquad\forall|h|>L-1-\eta\, .
\end{equation}

Concerning the ``intermediate'' $|h|$, we notice that \eqref{first} and \eqref{ff} also imply that
$$
\psi(h,0)\ge f(\gamma_1)>0\mbox{ if }\gamma_1\le h<L-1,\quad
\psi(h,0)\le f(-\gamma_1)<0\mbox{ if }-L+1<h\le-\gamma_1.
$$
By continuity of $f$ and $\psi$, and by compactness, this shows that there exists $\gamma_\eta>0$ such that:\par
-- $\psi(h,\calr)>0$ whenever $(h,\calr)\in[\gamma_1,L-1-\eta]\times(0,\gamma_\eta)$;\par
-- $\psi(h,\calr)<0$ whenever $(h,\calr)\in[-L+1+\eta,-\gamma_1]\times(0,\gamma_\eta)$.\par
If we take $\overline{\calr}=\min\{\gamma_1,\gamma_\eta\}$, and we recall \eqref{epsilon} and \eqref{limitpsi}, this completes the proof of
the statement.\end{proof}

Lemma \ref{fpositive} proves \eqref{claim} and, thereby, Theorem \ref{unique} for problem \eqref{NS}-\eqref{compatibility}, provided
that $\calr\cdot\lambda<\gamma_0$ (as in Lemma \ref{boundsolution}) and $\calr<\overline{\calr}$ (as in Lemma \ref{fpositive}).\par\medskip
Then we consider {\bf the fluid-structure problem \eqref{NS}-\eqref{compatibility-torque}.}
We intend here that $\Omega_h$ in \eqref{NS} should be replaced by $\Omega_\theta$.
Instead of \eqref{stokes}, we consider the following auxiliary Stokes problem:
\begin{equation}\label{stokes-torque}
\ba{cc}
\Div\bfT(\bfw,P)=0 ,\quad \Div\bfw=0\quad\mbox{in $\Omega_\theta$}\\ \ms
\bfw\left|\right._{\partial B_\theta}=-\bfx\times\bfe_3 ,\qquad\bfw\left|\right._\Gamma=\Lim{|x_1|\to\infty}\bfw(x_1,x_2)=0 ,
\ea
\end{equation}
which admits a unique solution $\bfw$, depending on $\theta$: $\bfw=\bfw(\theta)$.
The force exerted by the fluid on the body can be computed through an alternative formula containing an integral over $\Omega_\theta$ that involves $\bfw$. Moreover, since for the torque problem we never have limit situations with ``thin channels'', we obtain a
stronger result than Lemma \ref{boundw}, ensuring a {\em uniform bound} for $\bfw(\theta)$.

\begin{lemma}\label{torque}
Assume that $\calr\cdot\lambda<\gamma_0$, let $\bfu=\bfu(\theta,\calr)$ be the unique solution of \eqref{NS} (see Lemma \ref{boundsolution})
and let $\bfw$ be defined by \eqref{stokes-torque}. The force on $B$ (free to rotate) exerted by the fluid governed by \eqref{NS} can
be also computed as
\begin{equation}\label{new-lift-torque}
\bfe_3\cdot\ibth \bfx\times \bfT(\bfu,p)\cdot\bfn
=\calr\iot \bfu\cdot \nabla\bfu\cdot \bfw.
\end{equation}
Moreover, $\bfw=\bfw(\theta)$ satisfies a uniform upper bound with respect to $\theta$:
$$\exists K>0\, ,\qquad \|\nabla\bfw(\theta)\|_{2,\Omega_\theta}\le K\qquad\forall\theta\in\left(-\frac\pi2,\frac\pi2\right)\, .$$
\end{lemma}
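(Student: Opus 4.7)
The lemma comprises two claims: the integral representation \eqref{new-lift-torque} of the torque, and the $\theta$-uniform $H^1$ bound on $\bfw(\theta)$. Both will be obtained by mimicking Lemma \ref{lift-poi} and Lemma \ref{boundw}, respectively, the essential structural advantage being that the hypothesis $L^2>1+d^2$ keeps $B_\theta$ inside a fixed open ball $\calb$ compactly contained in $\mathbb{R}\times(-L,L)$, uniformly in $\theta\in(-\pi/2,\pi/2)$: no ``thin channel'' regime ever occurs.

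\emph{Representation \eqref{new-lift-torque}.} The starting observation is the scalar triple product identity
$$\bfe_3\cdot\bigl(\bfx\times\bfT(\bfu,p)\cdot\bfn\bigr)=\bigl(-\bfx\times\bfe_3\bigr)\cdot\bfT(\bfu,p)\cdot\bfn\,,$$
so that the boundary condition $\bfw=-\bfx\times\bfe_3$ in \eqref{stokes-torque} makes $\bfw\cdot\bfT(\bfu,p)\cdot\bfn$ coincide with the integrand of the torque on $\partial B_\theta$. Then I follow the proof of Lemma \ref{lift-poi} verbatim: testing \eqref{stokes-torque} against $\bfu=\bfv+\lambda\bfb$ (handling the pressure term via $\Div\bfb=0$ and the decay of $P$ at infinity, exactly as in Lemma \ref{lift-poi}) yields $\iot\nabla\bfw:\nabla\bfu=0$; multiplying \eqref{NS} by $\bfw$, integrating by parts over $\Omega_\theta$, and using this orthogonality together with the triple product identity on $\partial B_\theta$ delivers \eqref{new-lift-torque}.

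\emph{Uniform bound on $\bfw(\theta)$.} The plan is to build, once and for all, a single divergence-free extension of $-\bfx\times\bfe_3$ whose $H^1$ norm is independent of $\theta$. Pick $\phi\in C^\infty_c(\mathbb{R}\times(-L,L))$ with $\phi\equiv 1$ on $\calb$ and define
$$\bfPhi(\bfx):={\rm curl}\bigl(-\tfrac12|\bfx|^2\phi(\bfx)\,\bfe_3\bigr)\,.$$
A direct computation shows $\Div\bfPhi=0$, $\bfPhi=-\bfx\times\bfe_3$ on $\calb$ (hence on $\partial B_\theta$), $\bfPhi$ vanishes on $\Gamma$ and outside a fixed compact set, and $\|\nabla\bfPhi\|_{2,\Omega_\theta}\le K$ with $K$ depending only on $\phi$ and $\calb$. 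Testing \eqref{stokes-torque} against $\bfw-\bfPhi$ (admissible, since it is solenoidal and vanishes on $\Gamma\cup\partial B_\theta$), the pressure term drops out thanks to $\Div(\bfw-\bfPhi)=0$ and a single integration by parts produces
$$\|\nabla\bfw\|_{2,\Omega_\theta}^{\,2}=\iot\nabla\bfw:\nabla\bfPhi\le K\,\|\nabla\bfw\|_{2,\Omega_\theta}\,,$$
whence the uniform bound after dividing by $\|\nabla\bfw\|_{2,\Omega_\theta}$.

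The only step that is not entirely routine is the construction of $\bfPhi$ with $\theta$-independent estimates, and that is precisely where the geometric restriction $L^2>1+d^2$ enters: the absence of the thin-channel regime allows one fixed cut-off $\phi$ to serve for every $\theta$, replacing the singular $c\,\varepsilon^{-3/2}$ bound of Lemma \ref{boundw} by a universal constant $K$.
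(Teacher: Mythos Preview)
Your proposal is correct and follows the same route as the paper, which merely sketches the argument by pointing back to Lemmas \ref{lift-poi} and \ref{boundw}. Your explicit stream function $\bfPhi={\rm curl}\bigl(-\tfrac12|\bfx|^2\phi\,\bfe_3\bigr)$ is precisely the natural adaptation of the construction \eqref{FG0} to the rotational boundary datum, and your observation that a single $\theta$-independent cut-off suffices (thanks to $L^2>1+d^2$) is exactly the reason the paper gives for the uniformity of the bound.
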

\begin{proof} The proof of \eqref{new-lift-torque} may be obtained by following the same steps as for Lemma \ref{lift-poi}.\par
For the upper bound, may use the very same strategy as for the proof of Lemma \ref{boundw}, in particular by using the cut-off functions
introduced therein. We end up with a bound such as \eqref{oggi2} but since here we have no boundary layer (no limit singular situation)
the bound is uniform, independently of $\theta$.\end{proof}

We deduce from Lemma \ref{torque} that the compatibility condition \eqref{compatibility-torque} can be written as
$$\chi(\theta,\calr) := g(\theta)- \calr\iot \bfu\cdot \nabla\bfu\cdot \bfw(\theta)=0.$$
As for \eqref{claim}, Theorem \ref{unique} will be proved for problem \eqref{NS}-\eqref{compatibility-torque} if we show that
\begin{equation}\label{claim2}
\exists\calr_0>0\quad{s.t.}\quad \chi(\theta,\calr)=0\ \Longleftrightarrow\ \theta=0\quad\forall\calr<\calr_0.
\end{equation}

By symmetry of $\Omega_0$ we know that $\chi(0,\calr)=0$ for all $\calr>0$. Moreover, Lemma \ref{torque} also implies that
\begin{equation}\label{first-torque}
\chi(\theta,0)=g(\theta)\qquad\forall \theta\in\left(-\frac\pi2 ,\frac\pi2 \right).
\end{equation}
We refer again to \cite{bello,galdih,henrot} for the differentiability of $\chi$.
In particular, \eqref{first-torque} shows that $\partial_\theta\chi(\theta,0)=g'(\theta)>0$ which, combined with the Implicit Function
Theorem, implies that there exists $\gamma_1>0$ such that
\begin{equation}\label{epsilon2}
0<\theta,\calr<\gamma_1\ \Longrightarrow\ \Big(\chi(\theta,\calr)=0\ \Leftrightarrow\ \theta=0\Big).
\end{equation}

When $|\theta|$ is close to $\pi/2$, the uniform bounds for $\bfu(\theta,\calr)$ in Lemma \ref{boundsolution} and for $\bfw(\theta)$
in Lemma \ref{torque} show that there exists $\overline{C}>0$ (independent of $\theta$ and $\calr$, provided that $\calr$ satisfies the
smallness condition in Lemma \ref{boundsolution}) such that
$$
\calr\left|\iot \bfu(\theta,\calr)\cdot \nabla\bfu(\theta,\calr)\cdot \bfw(\theta)\right|\le
\overline{C}\qquad\forall\theta\in\left(-\frac\pi2,\frac\pi2\right)\, .
$$
On the other hand, by \eqref{gg} we know that there exists $\eta>0$ such that
$$
|g(\theta)|>2\overline{C}\qquad\forall|\theta|>\frac\pi2 -\eta\, .
$$
By combining these two facts we see that
\begin{equation}\label{limitchi}
|\chi(\theta,\calr)|\ge\overline{C}>0\qquad\forall|\theta|>\frac\pi2 -\eta\, .
\end{equation}

Concerning the ``intermediate'' $\theta$, we notice that \eqref{gg} and \eqref{first-torque} also imply that
$$
|\chi(\theta,0)|\ge g(\gamma_1)>0\qquad\forall\gamma_1\le|\theta|\le\frac\pi2-\eta.
$$
By continuity of $g$ and $\chi$, and by compactness, this shows that there exists $\gamma_\eta>0$ such
that $|\chi(\theta,\calr)|>0$ whenever $\gamma_1\le|\theta|\le \frac\pi2 -\eta$ and $\calr<\gamma_\eta$.
This fact, together with \eqref{epsilon2} and \eqref{limitchi}, proves \eqref{claim2} and, hence, also Theorem \ref{unique} for problem \eqref{NS}-\eqref{compatibility-torque}.\par\bigskip\noindent
{\bf Acknowledgements.} This research is supported by the Thelam Fund (Belgium), Research proposal FRB 2019-J1150080.
The work of G.P.\ Galdi is also partially supported by NSF Grant DMS-1614011. The work of
F.\ Gazzola is also partially supported by the PRIN project {\em Direct and inverse problems
for partial differential equations: theoretical aspects and applications} and by the Gruppo Nazionale per l'Analisi Matematica, la
Probabilit\`a e le loro Applicazioni (GNAMPA) of the Istituto Nazionale di Alta Matematica (INdAM).

\end{document}